\documentclass[12pt, a4paper]{amsart}

\usepackage{amsmath, amsthm, amssymb}
\usepackage{graphicx}

\usepackage{enumerate}
\usepackage[top=2.5cm, bottom=2.5cm, left=2.5cm, right=2.5cm]{geometry}

\newtheorem{theorem}{\bf Theorem}
\newtheorem{corollary}{\bf Corollary}

\theoremstyle{definition}
\newtheorem{remark}{Remark}{\rm}
{\rm}

\usepackage{color}

\usepackage[T1]{fontenc}

\usepackage{mathrsfs}
\usepackage{soul}

\newcommand{\Fix}{\ensuremath{\operatorname{Fix}}}
\newcommand{\sFix}{\ensuremath{\operatorname{{\bold{Fix}}}}}


\begin{document}

\title{On the local convergence of the\\ Douglas--Rachford algorithm}
\date{January 23, 2014}
\author{H.H. Bauschke$^\dag$, D. Noll$^*$}
\thanks{$^\dag$ University of British Columbia, Kelowna, Canada}
\thanks{$^*$ Institut de Math\'ematiques de Toulouse, France}
\maketitle

\begin{abstract}
We discuss the Douglas--Rachford algorithm to solve the feasibility
problem for two closed sets $A,B$ in $\mathbb R^d$. We prove its local convergence to a fixed point
when $A,B$ are finite unions of convex sets. We also show that for more general 
nonconvex sets the scheme may fail to converge and start to cycle, and may then even fail to
solve the feasibility problem.

\vspace*{.2cm}
\noindent
{\bf Keywords } nonconvex feasibility problem $\cdot$ fixed-point $\cdot$ 
discrete dynamical system
$\cdot$ convergence $\cdot$ stability 
\end{abstract}

\section{Introduction}
The Douglas--Rachford iterative scheme, originally introduced in \cite{douglas}
to solve nonlinear heat flow problems, aims  
to find a point $x^*$ in the intersection of two closed constraint sets $A,B$
in $\mathbb R^d$ or in Hilbert space. 
Using monotone operator theory,
Lions and Mercier \cite{lions}  showed
that the scheme converges  weakly for two closed convex sets $A,B$ in Hilbert space with non-empty
intersection. 
A rather comprehensive analysis of  the convex case is given in \cite{combettes}.

Due to its success in applications, the Douglas--Rachford
scheme is  frequently  used in the nonconvex setting despite the lack of a satisfactory
convergence theory. 
Recently Hesse and Luke \cite{hesse}
made progress by proving local convergence of the scheme 
for   $B$ an 
affine subspace intersecting the set $A$
transversally, where $A$ is no longer convex, but satisfies
a regularity hypothesis called superregularity.  
Numerical experiments in the nonconvex case
indicate, however, that the Douglas--Rachford scheme should converge
in much more general situations.
Very frequently one observes that the iterates
settle for convergence after a  chaotic transitory phase;
see \cite{aragon,veit} and the references therein.
Here we prove local convergence of the
Douglas--Rachford scheme when $A,B$
are finite unions of convex sets. Our result is complementary to \cite{hesse},
because no transversality hypothesis is required.
This result is proved in section \ref{convergence}.

We will also  show
that for nonconvex sets $A,B$  the Douglas--Rachford scheme may fail to
converge and start to  cycle without solving the feasibility problem. We show that
this may even lead to continuous limiting cycles. These are more delicate to
construct, because in that case
the Douglas--Rachford sequence  $x_{n+1}\in T(x_n)$
is bounded and satisfies $x_{n+1}-x_n\to 0$, but fails to converge. 
Our construction is given in section \ref{failure}.

\section{Preparation}
Given a closed subset $A$ of $\mathbb R^d$, the projection
onto $A$ is the set-valued mapping $P_A$
defined as 
\[
P_A(x) = \big\{ a\in A: \|x-a\| = {\rm dist}(x,A) \big\},
\]
where $\|\cdot\|$ is the Euclidean norm, and
dist$(x,A)=\min\{\|x-a\|: a\in A\}$. The reflection of $x$ in $A$
is then the set-valued operator
\[
R_A = 2P_A - I. 
\]
Given two closed sets $A,B$ in $\mathbb R^d$, the Douglas--Rachford iterative scheme, starting at $x_0$,
generates a sequence $x_n$ by the recursion
\[
x_{n+1} \in T(x_n), \qquad T :=\textstyle \frac{1}{2}\left( R_BR_A+I \right). 
\]
We call
$T$ the Douglas--Rachford operator, or shortly,  DR operator.
Suppose $x^+\in T(x)$ is one step of the Douglas--Rachford scheme. Then $x^+$ is obtained  as
\[
x^+ =  x+b-a, 
\]
where $a \in P_A(x)$, $y=2a-x$, and
$b\in P_B(y)$.    We call  $a$   the shadow of  iterate $x$
on $A$, $b$ the reflected shadow of  $x$ in $B$, both used to produce $x^+$. We write $x^+=T(x)$ if the DR-operator is single-valued, and similarly for projectors $P_A,P_B$
and reflectors $R_A,R_B$.

The fixed point set of $T$ is defined as $\Fix(T)=\{x\in\mathbb R^d: x\in T(x)\}$.
Note that if $x^*\in \Fix(T)$, and if $a^*\in A$, $b^*\in B$ are the shadow 
and reflected shadow of $x^*$
used to produce $x^*\in T(x^*)$, then $a^*=b^*\in A \cap B$,
so every fixed point
gives  rise to a solution $a^*\in A \cap B$
of the feasibility
problem. However, in the set-valued case, 
it may happen that $x^*\in \Fix(T)$ has other shadow-reflected shadow pairs
$(\tilde{a},\tilde{b})$ leading away from
$x^*$, i.e., where $\tilde{a} \not = \tilde{b}$, so that $\tilde{x}=x^*+\tilde{b}-\tilde{a}\in T(x^*)\setminus \{x^*\}$. 
We therefore introduce the set of strong fixed-points
of $T$ as
\[
\sFix(T) = \big\{x\in \mathbb R^n: T(x)=\{x\}\big\}.
\]
Note that $A \cap B \subset \sFix(T) \subset \Fix(T)$. 
If $T$ is single-valued, then $\Fix(T)=\sFix(T)$.

These concepts are linked to discrete dynamical system theory,
where fixed points are   steady states. We recall that a steady state $x^*$
is stable in the sense of Lyapunov if for every $\epsilon > 0$ there exists $\delta > 0$
such that every trajectory $x_{n+1}\in T(x_n)$ with starting point
$x_0\in B(x^*,\delta)$ satisfies $x_n\in B(x^*,\epsilon)$ for all $n$. 
 It is clear that
$x^*\in \Fix(T)\setminus \sFix(T)$ can never be stable, because
$x_0=x^*$ produces trajectories going away from $x^*$.

\section{Unions of convex sets}
\label{convergence}
In this section
$A=\bigcup_{i\in I}A_i$
and $B=\bigcup_{j\in J} B_j$ are finite unions of closed convex sets, a case which is
of interest in a number of practical applications like
rank or sparsity optimization \cite{hesse2}, or even road design \cite{road}, 
where finite  unions of linear or affine subspaces 
are used. 
For every $i\in I$ and $j\in J$ let $T_{ij}$ be the Douglas--Rachford operator
associated with the sets $A_i,B_j$. By convexity of $A_i,B_j$, the operators $T_{ij}$ are single-valued,
and $T(x)\subset \{T_{ij}(x): i\in I, j\in J\}$.

Since $A,B$ are finite unions of convex sets, every DR step  is realized as the DR step
of one of the operators $T_{ij}$, and in that case, we say that this operator is active.
To make this precise, we define the set 
of active indices at $x$ as
\begin{eqnarray}
\label{active}
K(x) =\big\{(i,j)\in I\times J: P_{A_i}(x)\in P_A(x), P_{B_j}\left(R_{A_i}\left( x\right)\right)\in 
P_B\left( R_{A_i}(x) \right)\big\}.
\end{eqnarray}
Note that if $(i,j)\in K(x)$, then $T_{ij}(x)\in T(x)$. Conversely, for every $x^+\in T(x)$, 
there exists $(i,j)\in K(x)$ such that $x^+=T_{ij}(x)$.  However, be aware that $T_{ij}(x)\in T(x)$
may be true without $(i,j)$ being active at $x$.

\begin{theorem}
\label{theorem1}
{\bf (Local attractor).}
Let $A=\bigcup_{i\in I}A_i$ and $B=\bigcup_{j\in J}B_j$ be finite unions of closed convex sets, and
let $x^*\in \sFix(T)$ be a strong fixed point. Then $x^*$ has a radius of attraction $R>0$ 
with the following property: For every $0 < \epsilon < R$, suppose
a Douglas--Rachford trajectory $x_{n+1}\in T(x_n)$ 
enters the ball $B(x^*,\epsilon)$.  Then it
stays there 
and converges to some
fixed point $\bar{x}\in \Fix(T)$. Moreover, 
every accumulation point of the  shadow sequence $a_n\in P_A(x_n)$  
is a solution of the feasibility problem. The radius of attraction
can be computed as
\begin{eqnarray}
\label{radius}
R = \sup\big\{ \epsilon > 0: K\left(
B(x^*,\epsilon)\right)\subset K(x^*)\big\}.
\end{eqnarray}
\end{theorem}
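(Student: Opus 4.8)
The plan is to exploit the fact that inside the ball only finitely many, pairwise firmly nonexpansive, operators can be active, all sharing $x^*$ as a fixed point. First I would record the basic reduction: since $\epsilon<R$, the definition \eqref{radius} gives $K(\overline{B(x^*,\epsilon)})\subseteq K(x^*)$, and since $x^*\in\sFix(T)$ forces $T_{ij}(x^*)=x^*$ for every $(i,j)\in K(x^*)$, every operator that can be active inside the ball is a convex Douglas--Rachford operator fixing $x^*$. Each such $T_{ij}=\frac12(R_{B_j}R_{A_i}+I)$ is firmly nonexpansive. Hence if $x_n\in\overline{B(x^*,\epsilon)}$ and $x_{n+1}=T_{ij}(x_n)$ with $(i,j)\in K(x_n)\subseteq K(x^*)$, nonexpansiveness gives $\|x_{n+1}-x^*\|\le\|x_n-x^*\|$. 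This simultaneously shows the trajectory cannot leave the ball once it enters and that $\|x_n-x^*\|$ is nonincreasing, hence convergent to some $\ell\ge 0$; in particular $x^*$ is Lyapunov stable.

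Next I would extract asymptotic regularity. Writing the firm nonexpansiveness of the active $T_{ij}$ against the common fixed point $x^*$ in the form $\|x_{n+1}-x^*\|^2+\|x_{n+1}-x_n\|^2\le\|x_n-x^*\|^2$ and summing telescopes from the entry time, giving $\sum_n\|x_{n+1}-x_n\|^2<\infty$ and therefore $\|x_{n+1}-x_n\|\to 0$. This already yields the shadow claim for free: since $x_{n+1}-x_n=b_n-a_n$ with $a_n\in P_A(x_n)\subseteq A$ and $b_n\in P_B(2a_n-x_n)\subseteq B$, we get $b_n-a_n\to 0$, so any accumulation point of $(a_n)$ lies in both closed sets $A$ and $B$, i.e.\ solves the feasibility problem.

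The heart of the proof is upgrading boundedness plus $\|x_{n+1}-x_n\|\to 0$ to convergence of the full sequence; this is exactly the point that fails for general nonconvex sets, so the union-of-convex structure must be used decisively here. Let $\bar x$ be any accumulation point (one exists by compactness of the ball). I would first check that the active-index map $K$ has closed graph --- if $(i,j)\in K(x_m)$ and $x_m\to\bar x$, then passing to the limit in the two continuous distance-equalities defining \eqref{active} shows $(i,j)\in K(\bar x)$ --- and conclude, using the finiteness of $I\times J$, that there is a ball $B(\bar x,\rho)$ on which $K\subseteq K(\bar x)$. Splitting $K(\bar x)$ into the indices that fix $\bar x$ and those that move it, the key observation is that a moving index $(i,j)$ has $\|T_{ij}(\bar x)-\bar x\|=\delta_{ij}>0$, so by continuity $\|T_{ij}(x)-x\|$ stays bounded below near $\bar x$; once $\|x_{n+1}-x_n\|$ has dropped below this threshold, no moving index can be active while $x_n$ is close to $\bar x$. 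Hence on a small enough ball about $\bar x$ every active step is performed by an operator fixing $\bar x$, which is nonexpansive about $\bar x$; therefore, once the trajectory enters this ball (which it does, $\bar x$ being an accumulation point), it stays and $\|x_n-\bar x\|$ becomes nonincreasing. A nonincreasing distance with a subsequence tending to $0$ forces $x_n\to\bar x$, and $\bar x=T_{ij}(\bar x)\in T(\bar x)$ shows $\bar x\in\Fix(T)$.

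I expect the main obstacle to be precisely this third step: ruling out a drift of the iterates along a continuum of fixed points. The mechanism I would rely on is the tension between asymptotic regularity ($\|x_{n+1}-x_n\|\to 0$) and the upper semicontinuity of the active set $K$ --- a ``moving'' active operator necessarily produces a step bounded away from zero near $\bar x$, which the vanishing step sizes forbid. Verifying the closed-graph property of $K$ from \eqref{active} and handling the bookkeeping of the finitely many indices are routine; the conceptual content is this incompatibility, together with the firm nonexpansiveness supplied by convexity.
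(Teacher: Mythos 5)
You prove the statement, but the decisive step travels a genuinely different route from the paper's. The localization is common ground: you obtain $K(x)\subset K(x^*)$ near $x^*$ from the closed graph of the active-index map \eqref{active} together with finiteness of $I\times J$, where the paper gets the same inclusion from explicit gap estimates (a $\delta_1$ for inactive sets $A_i$ and a $\delta_2$ for inactive pairs $(i,j)$); both proofs then use firm nonexpansiveness of the convex operators $T_{ij}$ fixing $x^*$ (this is where $x^*\in\sFix(T)$ enters) to trap the trajectory in the ball. The divergence is in how convergence of the full sequence is obtained: the paper simply invokes the Elsner--Koltracht--Neumann theorem \cite[Thm.~1]{elser} on iterations by finitely many paracontractions with a common fixed point, whereas you prove convergence from scratch --- summing the firm-nonexpansiveness inequality against the common fixed point $x^*$ gives $\sum_n\|x_{n+1}-x_n\|^2<\infty$, and then your moving/fixing dichotomy at an accumulation point $\bar x$ (an active index that moves $\bar x$ forces a step length bounded away from zero near $\bar x$, which vanishing steps forbid) shows that eventually every active operator fixes $\bar x$, so $\|x_n-\bar x\|$ is ultimately nonincreasing and, having a subsequence tending to zero, tends to zero. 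This is in substance the argument the paper reserves for its Theorem 2 (convergence of bounded, asymptotically regular DR sequences); in effect you verify the hypotheses of that theorem inside the ball and then reprove it without the external reference. What your route buys is self-containedness, the identification of the limit as an accumulation point, and a cleaner shadow argument ($a_n-b_n\to 0$ with $A,B$ closed immediately puts every accumulation point of $a_n$ in $A\cap B$); what the paper's route buys is brevity. One loose end to fix: the theorem asserts $R>0$, i.e.\ that the supremum in \eqref{radius} is over a nonempty set, and your opening sentence presupposes this by taking $0<\epsilon<R$. The needed fact is exactly your closed-graph-plus-finiteness lemma applied at the point $x^*$ instead of at $\bar x$; state that application explicitly and the proof is complete.
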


\begin{proof}
1)
The fact that $x^*\in \sFix(T)$ is a strong fixed point has the following consequence. Whenever
$(i,j)\in K(x^*)$ is active, then $P_{A_i}\left( x^*\right) = P_{B_j}\left( R_{A_i}(x^*)\right)\in A \cap B$. 
Therefore, for every $(i,j)\in K(x^*)$, the operator
$T_{ij}$ has $x^*$ as a fixed point. 

2) We now show the following.
There exists $\epsilon>0$ such that every
$x\in B(x^*,\epsilon)$ has $K(x)\subset K(x^*)$.

Let us consider the set
$I(x)=\{i\in I: \mbox{there exists } j\in J  \mbox{ such that } (i,j)\in K(x)\}$  of active
indices $i\in  I$ at $x$. Then by definition
\begin{eqnarray}
\label{neu}
\delta_1: = \min\big\{{\rm dist}(x^*,A_i): i\not\in I(x^*)\big\} -
{\rm dist}(x^*,A)>0.
\end{eqnarray}
Similarly, we have
\begin{eqnarray}
\label{veryneu} \quad
\delta_2 := \min \big\{{\rm dist}\left(R_{A_i}(x^*),B_j
\right)-{\rm dist}\left(R_{A_i}(x^*),B\right) : i\in I(x^*),
(i,j)\not\in K(x^*)\big\}>0.
\end{eqnarray}
Choose  $\epsilon > 0$ such that $2\epsilon < \min\{\delta_1,\delta_2\}$. We show
that $\epsilon$ is as claimed.  Pick $(i,j)\not\in K(x^*)$. We have to
show that $(i,j)\not  \in K(x)$.
\
First consider the case where $i\in I\setminus I(x^*)$. We  show that $i\in I\setminus I(x)$. Indeed,
\begin{align*}
{\rm dist}(x,A) &\leq \|x-x^*\| + {\rm dist}(x^*,A) \\
&\leq \epsilon + {\rm dist}(x^*,A_i)-\delta_1  \qquad\qquad \mbox{(using (\ref{neu}))}\\
&\leq 2\epsilon + {\rm dist}(x,A_i)-\delta_1 < {\rm dist}(x,A_i),
\end{align*}
showing that $i\not \in I(x)$. 
\
We now discuss the case
where $i\in I(x^*)$, but $(i,j)\not\in K(x^*)$. That means 
$ {\rm dist}(R_{A_i}(x^*),B_j) - {\rm dist}(R_{A_i}(x^*),B)\geq \delta_2$.
Therefore
\begin{align*}
{\rm dist}(R_{A_i}(x),B) &\leq  \|R_{A_i}(x)-R_{A_i}(x^*)\| + {\rm dist}(R_{A_i}(x^*),B) \\
&\leq \epsilon + {\rm dist}(R_{A_i}(x^*),B_j)- \delta_2\qquad\qquad\qquad  \mbox{(using (\ref{veryneu}))} \\
&\leq 2\epsilon + {\rm dist}(R_{A_i}(x),B_j) - \delta_2 < {\rm dist}(R_{A_i}(x),B_j),
\end{align*}
proving $(i,j)\not\in K(x)$.

3) As an immediate consequence of 2) we have the following: If 
$x\in B(x^*,\epsilon)$ and 
$x^+\in T(x)$ is realized as $x^+ = T_{ij}(x)$ for some active operator $T_{ij}$, that is, for some
$(i,j)\in K(x)$, then
this operator $T_{ij}$ has $x^*$ as a fixed point. Namely,
by 2) $x$ satisfies $K(x)\subset K(x^*)$, hence $(i,j)\in K(x^*)$, and
therefore $P_{A_i}(x^*) = P_{B_j}(R_{A_i}(x^*))$, which proves what we claimed.

5) We next show that as soon as a DR sequence $x_{n+1}\in T(x_n)$
enters the ball $B(x^*,\epsilon)$, then it stays there and converges.

This can be seen as follows. Suppose the trajectory enters $B(x^*,\epsilon)$
at stage $n$. Then
the active operator $T_{i_nj_n}$ 
used to produce $x_{n+1}=T_{i_nj_n}(x_n)\in T(x_n)$
has $x^*$ as a fixed point, because $(i_n,j_n)\in K(x_n)\subset K(x^*)$. Therefore,
by \cite[Prop.~4.21]{bauschke_book}, this operator satisfies
$\|x_{n+1}-x^*\| = \|T_{i_nj_n}(x_n)- x^*\| \leq \|x_n-x^*\|\leq \epsilon$. The conclusion is
that from index $n$ onward the sequence $x_n$ stays in the ball $B(x^*,\epsilon)$,
and all operators $T_{i_mj_m}$ used from here on have $x^*$ as a common fixed point. 

Now we invoke Elsner {\em et al.} \cite[Thm.~1]{elser},  who show that 
$x_n$ converges to a common fixed point $\bar{x}$ of the operators
$T_{i_mj_m}$, $m\geq n$. But $\bar{x}$ is then also a fixed point of $T$, as follows
from the continuity of the distance functions. One has $\bar{x}\in B(x^*,\epsilon)$,
and moreover, if $a_n = P_{i_n}(x_n)\in A_{i_n}\subset A$, then every accumulation
point of the sequence $a_n$ is a solution of the feasibility problem. Namely,
if we consider $b_n = P_{B_j}\left( R_{A_i}(x_n)\right)\in B$, and if we take
accumulation points $a^*$ of $a_n$ and $b^*$ of $b_n$, then $a^*\in P_A(x^*)$,
$b^*\in P_B\left( 2a-x^*)\right)$, hence $a^*=b^*$, because $x^*$ is a strong fixed point.

6) To conclude let us now define the radius of attraction $R$ as in formula
(\ref{radius}). In 1) -- 5) above we have shown that there exists
$\epsilon>0$ such that $K\left( B(x^*,\epsilon) \right) \subset K(x^*)$, and that 
this inclusion alone already
implies convergence of every trajectory entering $B(x^*,\epsilon)$. This 
means that the supremum in (\ref{radius}) is over a nonempty set, and that
is all that we need. 
\end{proof}

\begin{remark}
{\bf (Stable steady state.)}
The dynamic system interpretation of Theorem \ref{theorem1} is that
a strong fixed point $x^*\in \sFix(T)$ is a stable steady state of the Douglas--Rachford dynamical system
$x^+\in T(x)$ when $A,B$
are unions of convex sets. 
Note also that we
do not claim that $\bar{x}\in \Fix(T)$ is strong, nor do we claim that the iterates
converge to $x^*$ itself.  
\end{remark}

The following
observation is also of the essence.

\begin{remark}
{\bf (Strong fixed point needed).}
Theorem \ref{theorem1} is not true if $x^*\in \Fix(T)\setminus 
\sFix(T)$, that is,
if $x^*$ is not a strong fixed point. Indeed, let
$A = \{-1,1\}$ and $B=\{-2,1\}$. Then $x^*=0$ is a fixed-point of $T$, but not a strong one. 
Now there exist arbitrarily small $\epsilon \in (0,1)$ such that trajectories starting in
$B(0,\epsilon)=(-\epsilon,\epsilon)$  will not stay in that ball. Indeed,
for $-\epsilon < x < 0$, $x^+$ will move away from $0$ and will not stay in $B(0,\epsilon)$, while for
$0<x <\epsilon$, $x^+$   stays. \hfill $\square$
\end{remark}

\begin{remark}
Note that if $A,B$ are convex sets,
then all $K(x)$ are identical singleton sets, so formula
(\ref{radius}) gives
$R=\infty$, which means 
the DR scheme converges globally. 
\end{remark}

\begin{remark}
Formula (\ref{radius})  allows to compute the radius of
attraction of
a strong fixed-point $x^*\in \sFix(T)$ in certain cases.  
For illustration, consider 
$A = \{(x,0):x\in \mathbb R\} \cup \{(0,y): y\in \mathbb R\}$  a union of two lines
and $B= \{(x,y): y = -\frac{y^*}{x^*}x + y^*\}$ a line, where $x^*>0$,
$y^*>0$. Then  $(0,y^*)$
and $(x^*,0)$ are the two only fixed points of $T$, both strong,  and one easily
finds $R(x^*,0)=x^*/\sqrt{2}$ and $R(0,y^*)=y^*/\sqrt{2}$. 
\end{remark}

\begin{remark}
{\bf (Asymptotic stability).}
Let $x^*\in \sFix(T)$ be a strong fixed point, and suppose
there exists $\delta > 0$ such that  $B(x^*,\delta )$ containing no further fixed point of $T$.
Then it follows from Theorem \ref{theorem1}
that we can find $0 < \epsilon \leq \delta$ such that every trajectory
$x_{n+1}\in T(x_n)$ entering $B(x^*,\epsilon)$ converges to $x^*$.
In the dynamical system terminology, $x^*$ is then asymptotically stable
in the sense of Lyapunov. Note that this still fails for an
isolated fixed point $x^*\in \Fix(T)\setminus
\sFix(T)$.
\end{remark}

\begin{theorem}
\label{theorem2}
{\bf (Local convergence).}
Let $A=\bigcup_{i\in I}A_i$ and $B=\bigcup_{j\in J}B_j$ be finite unions of convex sets.
Let $x_{n+1}\in T(x_n)$ be a bounded Douglas--Rachford sequence satisfying
$x_{n+1}-x_n\to 0$. Then $x_n$ converges to a fixed-point
$\bar{x}\in \Fix(T)$. Moreover, every accumulation point of the shadow sequence
$a_n\in P_A(x_n)$ is a solution to
the feasibility problem.
\end{theorem}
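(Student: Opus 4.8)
The plan is to reduce to the mechanism of the proof of Theorem \ref{theorem1}, replacing the strong-fixed-point hypothesis by the asymptotic regularity condition $x_{n+1}-x_n\to 0$. Since the sequence is bounded, it has an accumulation point $x^*$, and it suffices to prove that the whole sequence $x_n$ converges to $x^*$. First I would reuse part 2) of the proof of Theorem \ref{theorem1}, which relies only on the positivity of the gaps $\delta_1,\delta_2$ and on the triangle inequality, and nowhere on strongness, to obtain a radius $\rho>0$ with $K(B(x^*,\rho))\subset K(x^*)$. Next I would split the finite active set $K(x^*)$ into the \emph{good} pairs, for which $T_{ij}(x^*)=x^*$, and the \emph{bad} pairs, for which $\beta_{ij}:=T_{ij}(x^*)-x^*\neq 0$. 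Since each $T_{ij}$ is continuous (the sets $A_i,B_j$ being convex) and there are only finitely many bad pairs, shrinking $\rho$ if necessary produces a constant $\gamma>0$ such that $\|T_{ij}(x)-x\|>\gamma/2$ for every bad pair $(i,j)$ and every $x\in B(x^*,\rho)$.

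The key step is then to use $x_{n+1}-x_n\to 0$ to eliminate the bad operators near $x^*$. Choose $N$ so large that $\|x_{n+1}-x_n\|<\gamma/2$ for all $n\geq N$. Whenever $x_n\in B(x^*,\rho)$ with $n\geq N$, the active pair $(i_n,j_n)\in K(x_n)\subset K(x^*)$ producing $x_{n+1}=T_{i_nj_n}(x_n)$ must be good, for otherwise $\|x_{n+1}-x_n\|=\|T_{i_nj_n}(x_n)-x_n\|>\gamma/2$, a contradiction. As $x^*$ is an accumulation point, the trajectory enters $B(x^*,\rho)$ at some stage $m\geq N$; from that stage on every active operator used satisfies $T_{i_nj_n}(x^*)=x^*$. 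By the nonexpansiveness of a Douglas--Rachford operator relative to any of its fixed points \cite[Prop.~4.21]{bauschke_book} one has $\|x_{n+1}-x^*\|\leq\|x_n-x^*\|$, so by induction the trajectory never leaves $B(x^*,\rho)$ after stage $m$, and all operators $T_{i_nj_n}$, $n\geq m$, share $x^*$ as a common fixed point. Elsner {\em et al.}\ \cite[Thm.~1]{elser} then forces $x_n$ to converge to a common fixed point $\bar{x}$, which is a fixed point of $T$ by continuity of the distance functions; since $x^*$ is an accumulation point of $x_n$, this gives $\bar{x}=x^*\in\Fix(T)$.

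The statement about the shadow sequence is obtained exactly as in part 5) of the proof of Theorem \ref{theorem1}: passing to a subsequence along which the active pair is constant, the shadows $a_n=P_{A_{i_n}}(x_n)$ and reflected shadows $b_n=P_{B_{j_n}}(R_{A_{i_n}}(x_n))$ converge to points $a^*\in P_A(\bar{x})$ and $b^*\in P_B(2a^*-\bar{x})$, and $x_{n+1}-x_n=b_n-a_n\to 0$ yields $a^*=b^*\in A\cap B$. The delicate point, on which the whole argument turns, is the uniform separation estimate for the bad operators: I must establish that any active operator not already fixing $x^*$ produces a step bounded away from zero on an entire neighbourhood of $x^*$, so that the hypothesis $x_{n+1}-x_n\to 0$ can discard it. Everything else is a recombination of the nonexpansiveness and upper-semicontinuity ingredients already assembled in Theorem \ref{theorem1}.
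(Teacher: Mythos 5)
Your proposal is correct, but it implements the decisive step by a different mechanism than the paper. Both arguments share the same skeleton: fix an accumulation point $x^*$ of the bounded sequence, show that all DR steps taken late enough and close enough to $x^*$ use operators $T_{ij}$ having $x^*$ as a fixed point, then conclude via nonexpansiveness \cite[Prop.~4.21]{bauschke_book} that the tail never leaves a small ball, and via \cite[Thm.~1]{elser} that it converges to a common fixed point $\bar{x}$, which lies in $\Fix(T)$ by closedness of the graph of $T$. The difference is in how one proves that only \emph{good} operators (those fixing $x^*$) act near $x^*$. The paper argues by contradiction and compactness: if pairs outside $K_0(x^*)$ were active at points $x_{n_k}\to x^*$, then after extracting a subsequence with a constant pair $(i,j)$, the shadows $a_{n_k}=P_{A_i}(x_{n_k})$ and $b_{n_k}=P_{B_j}\left(R_{A_i}(x_{n_k})\right)$ converge, the hypothesis $a_n-b_n=x_n-x_{n+1}\to 0$ forces the limits to coincide, and upper semicontinuity of the projections then places $(i,j)$ in $K_0(x^*)$, a contradiction. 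You instead recycle part 2) of the proof of Theorem \ref{theorem1} (correctly observing that it nowhere uses strongness of $x^*$, only the positivity of $\delta_1,\delta_2$ and nonexpansiveness of the reflectors) to obtain the inclusion $K\left(B(x^*,\rho)\right)\subset K(x^*)$, and then eliminate the finitely many bad pairs in $K(x^*)$ by a uniform lower bound $\gamma/2$ on their step length $\|T_{ij}(x)-x\|$ on a neighbourhood of $x^*$, which the hypothesis $x_{n+1}-x_n\to 0$ eventually rules out. Both routes use asymptotic regularity essentially, just at different places: the paper applies it to limits of shadow subsequences, you apply it directly as a bound on step lengths. Your version is more quantitative, unifies the proofs of the two theorems, and identifies the limit as $\bar{x}=x^*$ (which the paper does not state); the paper's version is leaner in prerequisites, since its compactness argument needs neither the neighbourhood inclusion from Theorem \ref{theorem1} nor a uniform separation constant. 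Your treatment of the shadow statement coincides with the paper's.
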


\begin{proof}
1)
For every $n\in \mathbb N$ let us choose an active index pair
$(i_n,j_n)\in K(x_n)$ such that $x_{n+1}=T_{i_nj_n}(x_n)$. Put
$a_n=P_{A_{i_n}}(x_n)$ and $b_n=P_{B_{j_n}}\left( R_{A_{i_n}}(x_n) \right)=P_{B_{j_n}}(2a_n-x_n)$,
so that $x_{n+1}=x_n+b_n-a_n$. Note that
$a_n-b_n = x_n-x_{n+1}\to 0$ by hypothesis. 

2) Let $x^*$ be any accumulation point of the sequence $x_n$. We define a subset
$K_0(x^*)$ of the active set $K(x^*)$ as
\[
K_0(x^*) = \big\{(i,j)\in K(x^*): P_{A_i}(x^*)=P_{B_j}\left(
R_{A_i}(x^*) \right)\big\}.
\] 
Note that every $T_{ij}$ with $(i,j)\in K_0(x^*)$ has 
$x^*$ as a fixed point. 

3) We now claim that
for every accumulation point $x^*$ of the sequence $x_n$ there exists
$\epsilon > 0$ and an index $n_0$ such that for every $x_n$ with $n\geq n_0$
and $x_n\in B(x^*,\epsilon)$, we have $(i_n,j_n)\in K_0(x^*)$. 

To prove this, assume on  the contrary that for every
$\epsilon = \frac{1}{k}$ there exists $n_k$ such that $x_{n_k} \in B(x^*,\frac{1}{k})$, but
$(i_{n_k},j_{n_k})\not \in K_0(x^*)$. Moreover, let $n_k < n_{k+1}\to \infty$. 
Then $x_{n_k}\to x^*$. Passing to another subsequence
which we also denote by $x_{n_k}$,
we may assume that $i_{n_k}=i$, $j_{n_k}=j$. Then
$a_{n_k}=P_{A_i}(x_{n_k})\to a^*\in A$,
$b_{n_k}=P_{B_j}\left( R_{A_i}(x_{n_k}) \right) \to b^*\in B$. Since $a_n-b_n\to 0$
by part 1), we deduce that $a^*=b^*\in  A \cap B$. Since we also have
$a_{n_k}=P_{A_i}(x_{n_k})\in P_A(x_{n_k})$ and $b_{n_k}\in P_B\left( R_{A_i}(x_{n_k})\right)
\subset P_B\left( R_A(x_{n_k})\right)$, we 
get $a^*\in P_A(x^*)$ and $b^*\in P_B\left( R_A(x^*)\right)$, hence
 $(i,j)\in K(x^*)$.
Since $a^*=b^*$, we have
$(i,j)\in K_0(x^*)$. This contradiction proves the claim.

4) Since $x^*$ is an accumulation point of the sequence $x_n$, there exist
infinitely many indices with $x_n\in B(x^*,\epsilon)$. Choose one with $n\geq n_0$.
Then $x_{n+1}=T_{i_n j_n}(x_n)$, and by part 3) we have
$(i_n,j_n)\in K_0(x^*)$. By part 2), $T_{i_nj_n}$ has therefore $x^*$
as a fixed point. Using \cite[Prop.~4.21]{bauschke_book}, we deduce
$\|x_{n+1}-x^*\| = \|T_{i_nj_n}(x_n)-x^*\| \leq \|x_n-x^*\|\leq \epsilon$, hence
$x_{n+1}$ stays in the ball $B(x^*,\epsilon)$. This means we can repeat the argument, 
showing that the entire sequence $x_m$, $m\geq n$, stays in $B(x^*,\epsilon)$.
By part 3) the operators $T_{i_mj_m}$, $m\geq n$, have the common fixed point
$x^*$, hence we conclude again using \cite[Thm.~1]{elser} that $x_m$
converges to some $\bar{x}$, which must be a fixed point of $T$. The second part of
the statement follows now from $a_n-b_n\to 0$.
\end{proof}

\begin{remark}
\label{example1}
{\bf (Discrete limit cycle).}
Let $A= \{(x,y): y=0\}\subset \mathbb R^2$ be the $x$-axis, fix $1\geq \eta > 0$, and
put $B = \{(0,0), (7+\eta,\eta),(7,-\eta)\}$. When started at $x_1=(7,\eta)$, 
the method cycles between the four points $x_1$, $x_2=(7+\eta,0)$, $x_3=(7+\eta,-\eta)$, $x_4=(7,0)$. 
Note
that $B$ is a finite union of bounded convex sets and $A$ is convex,  the iterates 
$x_2$ and $x_4$ reach $A$, but the method fails to converge, and it also fails to solve the feasibility
problem.
\end{remark}

\begin{remark}
{\bf (Several shadows)}.
Let $B$ be a circle in $\mathbb R^2$, and let $A$ consist in the union of two
circles which touch  $B$ from outside in $a_1^*,a_2^*$. Then the centre $x^*$ of $B$
is a fixed-point of the Douglas--Rachford operator $T$, and the two
points $a_i^*\in A \cap B$ are both shadows of $x^*$. This shows that even in the case of convergence of
$x_n$ we do not expect the shadows $a_n$ to converge. 
\end{remark}

\begin{remark}
{\bf (More than two sets.)}
It is a standard procedure in applications to extend the Douglas--Rachford
scheme to solve the feasibility
problem for a  finite number of constraint sets $C_1,\dots,C_m$ in $\mathbb R^d$. One defines
$A$ to be the diagonal in $\mathbb R^d \times \dots \times \mathbb R^d$ ($m$ times), and chooses
as $B=C_1 \times \dots \times C_m$ in the product space. Then if
$\bigcap_{i=1}^m C_i \not= \emptyset$, the Douglas--Rachford algorithm in product space
can be used to compute a point in this intersection. The interesting observation is that
if each $C_i$ is a finite union of convex sets, then this remains true for the
set $B$, hence our convergence theory applies. 
\end{remark}

\section{Existence of a continuous limit cycle}
\label{failure}
In this section we construct two closed bounded  sets
$A,B$ such that the Douglas--Rachford iteration $x_{n+1}\in T(x_n)$ with
$T=\frac{1}{2}\left( R_BR_A +I\right)$  fails to converge 
and produces a continuum of accumulation points $F \subset \Fix(T)$ forming
a continuous limit cycle.
We let $A$ be the cylinder mantle
\[
A = \big\{ (\cos t,\sin t,h): 0 \leq t \leq 2\pi, 0 \leq h \leq
1\big\},
\]
and $B$ a double 
spiral consisting of two logarithmic spirals in 3D winding down against the cylinder,
one from inside, one from outside.
That is, 
\[
B= \big\{ ((1\pm e^{-t})\cos t,(1 \pm e^{-t} )\sin t, e^{-t/2}):
0 \leq t\big\} \cup F,
\]
where $F = \{(\cos\alpha,\sin\alpha,0): \alpha \in [0,2\pi]\}$. Note that 
$A \cap B=F$. We will construct a Douglas--Rachford sequence $x_{n+1}=T(x_n)$, whose set
of accumulation points is the entire set $F$.  It will be useful to divide
the spiral in its outer and inner part
\[
B_\pm = \big\{ ((1\pm e^{-t})\cos t,(1 \pm e^{-t} )\sin t,
e^{-t/2}): 0 \leq t\big\}  \cup F
\]
so that $B= B_-\cup B_+$ and $B_-\cap B_+=F$ with these notations. 

\begin{theorem}
\label{theorem3}
{\bf (Continuous limit cycle).}
Let $x_{n+1} = T(x_n)$ be any Douglas--Rachford sequence between $A$
and $B$ with starting point $x_1\in B_-\setminus F$. Then the sequence $x_n$ is bounded,
satisfies, $x_{n+1}-x_n\to 0$, but fails to converge. Its set of accumulation points
is $F=A\cap B \subset \sFix(T)$. 
\end{theorem}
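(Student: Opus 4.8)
The plan is to exploit the explicit form of the two projections and thereby reduce the dynamics to a scalar recursion in the spiral parameter $t$ that escapes to $+\infty$. First I would record the projection onto the cylinder: a point with cylindrical data (radius $\rho>0$, angle $\theta$, height $h\in[0,1]$) has $P_A$ equal to the point of radius $1$, angle $\theta$, height $h$, so $R_A$ fixes the angle and height and sends the radius $\rho$ to $2-\rho$. Since the inner and outer spiral points $b_\mp(t)$ sit at radii $1\mp e^{-t}$ at the common angle $t$ and height $e^{-t/2}\in(0,1]$, this yields the crucial identity $R_A(b_\mp(t))=b_\pm(t)$: reflection in the cylinder interchanges the two branches at the same parameter. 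Writing $c(t)=(\cos t,\sin t,e^{-t/2})=P_A(b_\pm(t))$ for the associated cylinder point, a one-line computation then shows that one DR step from any spiral point $b_\pm(t)$ produces exactly $c(t)$, because $R_A(b_\pm(t))=b_\mp(t)\in B$ is its own $P_B$-image and $b_+(t)+b_-(t)=2c(t)$. Hence the iteration alternates, $x_{2k-1}=b_\pm(t_k)$ on the spiral and $x_{2k}=c(t_k)$ on the cylinder, sharing the parameter $t_k$.

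Next I would analyze the remaining half-step, namely the projection of $c(t_k)$ back onto the spiral, which yields $x_{2k+1}=b_\pm(t_{k+1})$ and defines the update $t_k\mapsto t_{k+1}$. The key local computation is that, along either branch, the squared distance $s\mapsto\|c(t_k)-b_\pm(s)\|^2$ has derivative $-2e^{-2t_k}<0$ at $s=t_k$, so the minimiser is attained at a parameter strictly larger than $t_k$. To see that this local minimiser is the global one, I would use that the height $e^{-s/2}$ is strictly decreasing in $s$ and multiplies by $e^{-\pi}$ per turn, so any spiral point within the same-parameter distance $e^{-t_k}$ of $c(t_k)$ must have parameter in a small neighbourhood of $t_k$ containing a single coil, the adjacent coils being far too high or too low. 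Consequently $t_{k+1}>t_k$ for every $k$, whatever branch is selected when $P_B(c(t_k))$ is multivalued; this independence from the selection is what makes the argument valid for \emph{any} DR sequence.

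Then I would prove $t_k\to\infty$. The sequence $(t_k)$ is strictly increasing, so were it bounded it would converge to some finite $t_\infty$; by continuity $c(t_k)\to c(t_\infty)$ and $x_{2k+1}\to b_\pm(t_\infty)$, forcing ${\rm dist}(c(t_\infty),B)=\|c(t_\infty)-b_\pm(t_\infty)\|=e^{-t_\infty}$ and contradicting the strict inequality ${\rm dist}(c(t_\infty),B)<e^{-t_\infty}$ established above. Hence $t_k\to\infty$, and the four conclusions follow quickly. All iterates lie on the cylinder or the spiral inside a fixed bounded region, so $(x_n)$ is bounded. The consecutive gaps equal ${\rm dist}(c(t_k),B)$ or $e^{-t_k}$, both $O(e^{-t_k})\to0$, so $x_{n+1}-x_n\to0$. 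Every iterate has radius tending to $1$ and height $e^{-t_k/2}\to0$, so all accumulation points lie in $F$, while the angles $t_k$ increase to $\infty$ with increments tending to $0$ and hence become dense modulo $2\pi$, making every point of $F$ an accumulation point; since $F$ is a whole circle, the sequence cannot converge. Finally $F=A\cap B\subset\sFix(T)$ is immediate from the inclusion recorded in the preparatory section.

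The main obstacle is the projection onto the spiral: one must show rigorously that the global nearest point of $B$ to $c(t_k)$ sits at a parameter strictly exceeding $t_k$, excluding competing coils, and that the induced increments $t_{k+1}-t_k$, which are only of order $e^{-2t_k}$, do not sum to a finite limit, i.e. that $t_k$ genuinely escapes to $\infty$ rather than stalling. The monotonicity-plus-contradiction argument disposes of the latter cleanly, but both points rest on the delicate distance estimate for small parameters, where the convenient large-$t$ asymptotics are unavailable; this is where the most careful bookkeeping is required.
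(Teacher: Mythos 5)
Your overall strategy coincides with the paper's: reduce the dynamics to a strictly increasing spiral parameter $t_k$ (one DR step from a spiral point lands on the cylinder shadow $c(t_k)$, the next lands back on the spiral at parameter $t_{k+1}$), rule out a finite limit $t_\infty$ by the negative-derivative argument, and deduce density of the angles modulo $2\pi$ from $t_k\to\infty$ together with vanishing increments. Your branch-agnostic treatment (any selection $b_\pm(t_{k+1})\in P_B(c(t_k))$ returns to $c(t_{k+1})$ after one more step) is even a small simplification: it lets you bypass the paper's separate arguments that the projection of $c(t)$ always lands on the inner spiral $B_-$ and that it is eventually unique.

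However, there is a genuine gap at the central monotonicity claim, and it is exactly the point you flag at the end without resolving. From $\frac{d}{ds}\|c(t_k)-b_\pm(s)\|^2\big|_{s=t_k}=-2e^{-2t_k}<0$ you conclude that ``the minimiser is attained at a parameter strictly larger than $t_k$.'' The negative derivative only shows that $s=t_k$ is not the minimizer and that ${\rm dist}(c(t_k),B)<e^{-t_k}$; it says nothing about competitors $b_\pm(s)$ with $s<t_k$. Your height/coil argument does not close this hole either: it excludes coils far from $t_k$, and only for $t_k$ large (for small $t_k$ the window $|e^{-t_k/2}-e^{-s/2}|\le e^{-t_k}$ can span more than one full turn), but it never excludes parameters slightly below $t_k$ on the same coil. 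The missing ingredient is the paper's radial estimate, which is one line and uniform in $t_k>0$: since $P_A\left(b_\pm(s)\right)=c(s)$, every spiral point satisfies $\|c(t_k)-b_\pm(s)\|\ge {\rm dist}\left(b_\pm(s),A\right)=e^{-s}$, so $s<t_k$ forces $\|c(t_k)-b_\pm(s)\|\ge e^{-s}>e^{-t_k}\ge {\rm dist}(c(t_k),B)$, and no minimizer can have $s<t_k$; points of $F$ are likewise excluded because they lie at distance $e^{-t_k/2}>e^{-t_k}$. With this inserted, every nearest point has $s\ge t_k$, the derivative rules out equality, and your small-parameter worries disappear entirely: the large-$t$ asymptotics are then needed only where you actually use them, namely to get $t_{k+1}-t_k\to 0$ after $t_k\to\infty$ has already been established by the contradiction argument.
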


\begin{proof}
1)
For $t\geq 0$ 
let us introduce the notations
\[
a(t)= \big(\cos t,\sin t, e^{-t/2}\big) \in A,
\]
and 
\[
b_{\pm}(t) = \big((1\pm e^{-t})\cos t,(1\pm e^{-t})\sin
t,e^{-t/2}\big)\in B_{\pm}\setminus F.
\]
The set $\{a(t): t\geq 0\}\subset A$ is the shadow of the spiral
on the cylinder mantle. Namely, 
it is clear that for $t > 0$,
\begin{eqnarray}
\label{fifth}
P_A\left( b_+(t) \right) = P_A\left( b_-(t)\right) = a(t).
\end{eqnarray}
In particular, 
\begin{eqnarray}
\label{fourth}
\|b_\pm(t) - P_A\left( b_\pm(t) \right)\| = e^{-t}.
\end{eqnarray} 
In consequence
\[
R_A\left( b_+(t)\right) = b_-(t), \quad R_A\left( b_-(t)\right) = b_+(t).
\]
In words, 
the two branches $B_\pm$ of the double spiral are the reflections of each others
in the cylinder mantle. 

2)
Let us now analyze
the projection of $a(t)$ on the double spiral $B$. We consider the projections
of $a(t)$ on each of the branches $B_\pm$. 
We start with the analysis of $b\in P_{B_+}\left( a(t) \right)$. We first claim
that $b\not\in F$. Indeed, the point $v\in F$ closest to
$a(t)$ is $v= (\cos t,\sin t,0)=P_F\left( a(t)\right)$, so $\|v-a(t)\| = e^{-t/2}$. 
But $\|b_+(t) - a(t)\| = e^{-t} < e^{-t/2}$, hence there are points
on $B_+\setminus F$ closer to $a(t)$ than $v$. This shows that any projected point
$b\in P_{B_+}(a(t))$ has to be of the form $b_+(\tau)$ for
some $\tau \geq 0$. Now consider some such $b_+(\tau)\in P_{B_+}\left( a(t)\right)$, then
\begin{eqnarray}
\label{first}
e^{-t} = \| a(t)-b_+(t) \| \geq \| a(t) - b_+(\tau) \| \geq \| a(\tau) - b_+(\tau)\| = e^{-\tau},
\end{eqnarray}
which shows $\tau \geq t$. Here the second estimate follows from $a(\tau)=P_A\left( b_+(\tau)\right)$.

3)
Let us further observe that $\tau > t$. Namely, if we had
$\tau = t$, then we would have a fixed point pair for the method of alternating projections between
$A$ and $B_+$ in the sense that $a(t) = P_A\left( b_+(t)\right)$, $b_+(t)\in P_{B_+}\left(a(t) \right)$.
That would mean the distance squared
$\tau \mapsto \frac{1}{2} \| a(t) - b_+(\tau)\|^2$ had a local minimum
at $\tau = t$. But the derivative of this function at $\tau = t$ is $-e^{-2t} < 0$, so
$\tau = t$ is impossible, and we deduce $\tau > t$.

4)
Using $b_+(\tau)\in P_{B_+}\left( a(t)\right)$ and (\ref{first}), we find
\[
e^{-t}\geq \|a(t)-b_+(\tau)\| \geq | e^{-t/2}-e^{-\tau/2}| = e^{-t/2}-e^{-\tau/2}
=e^{-t} \left( e^{t/2}-e^{t-\tau/2} \right), 
\]
which shows 
\[
0 < e^{t/2} - e^{t-\tau/2} \leq 1.
\]
This can be re-arranged as 
\begin{eqnarray}
\label{bound}
0 < 1 - e^{t/2-\tau/2} \leq e^{-t/2}.
\end{eqnarray}
In particular, for $t\to \infty$ we must have $\tau-t\to 0$. 

5) 
Let us next show that the projection $b_+(\tau) = P_{B_+}\left( a(t)\right)$ is unique
for $t$ sufficiently large. Indeed, suppose we find $t<\tau_1<\tau_2$
such that $b_+(\tau_1),b_+(\tau_2)\in P_{B_+}\left( a(t)\right)$. Then by 
(\ref{bound}) we have $t < \tau_1 < \tau_2 < t - 2 \log\left( 1-e^{-t/2} \right)$.
Define the function $d_+(x) = \frac{1}{2} \|a(t)-b_+(x)\|^2$, then
$d_+(t) = \frac{1}{2} e^{-2t}$, $d_+'(t) = -e^{-2t} < 0$. Since
$\tau_1,\tau_2$ are local minima, we have
$d_+'(\tau_1)=d_+'(\tau_2)=0$. But
$d_+''(x) =  \cos(t-x) + 2e^{-2x}+2e^{-x}\sin(t-x)+\frac{3}{2}e^{-x}-\frac{1}{4}e^{-x/2-t/2}$. 
 In consequence, for $t$ large and $x$ moving in the interval
$x\in \left(t,t - 2 \log\left( 1-e^{-t/2} \right)\right)$,   we have $d_+''(x) \approx \cos(t-x)\approx 1$,
so certainly $d_+''(x)>0$ for these $x$, and since the local minima
$\tau_i$ are in that interval for $t$ large,
$d_+'(\tau_2)=0$ is impossible. 
This proves $b_+(\tau)=P_{B_+}\left( a(t)\right)$ for $t$ sufficiently large.

\centerline{
\includegraphics[width=16cm,height=8cm]{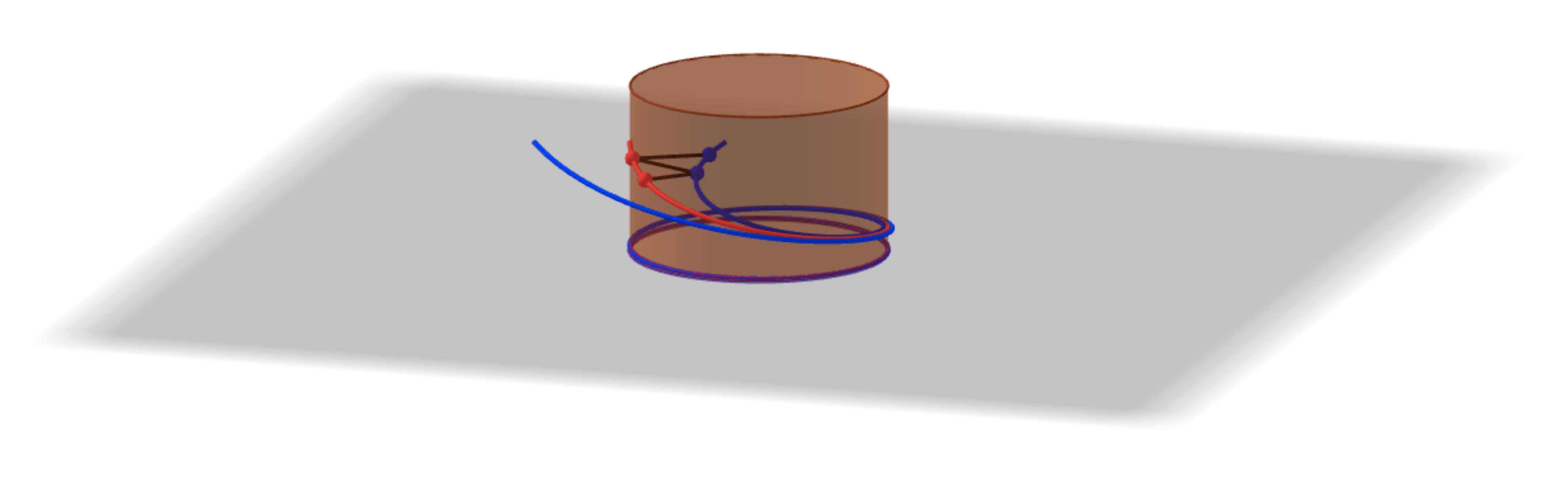}
}

\hspace*{-.3cm}
6)
Let us now consider the point $b_-(\tau)\in B_-\setminus F$ on the inner spiral. 
We claim that
$b_-(\tau)$ is closer to $a(t)$ than $b_+(\tau)$. 
Indeed, the set
of points $w$ having equal distance to $b_-(\tau)$ and $b_+(\tau)$ is the
tangent plane to the cylinder at the point $a(\tau)=\frac{1}{2}\left( b_-(\tau)+b_+(\tau) \right)$. But the cylinder lies
in one of the half-spaces associated with this plane, namely the one containing $b_-(\tau)$,
 $\|b_-(\tau) - a(t)\| < \|b_+(\tau)-a(t)\|$. Since $b_+(\tau)$ is the nearest point to $a(t)$
in $B_+$, we deduce that $P_B\left( a(t) \right) \subset P_{B_-}\left( a(t)\right)$
for all $t$. 
In other words, projections from the shadow of the spiral onto the double spiral always go 
to the inner spiral.

We could also use an analytic argument to prove this. Let $d_-(x)=\frac{1}{2}\|a(t)-b_-(x)\|^2$
and consider the function
$f(x) =d_+(x)-d_-(x)$, then $f(x)=\frac{1}{2}e^{-x}\left( 1-\cos(x-t) \right)$, so $f\ge 0$,
and $f=0$ for $x=t+2k\pi$.
Since $t<\tau < t-2\log(1-e^{-t/2})\ll t + 2\pi$, this proves $f(\tau) > 0$.

7) Let $b\in P_B\left( a(t)\right) = P_{B_-}\left( a(t)\right)$ a projected point of $a(t)$
in the inner spiral. We know already that $b\not\in F$, hence
$b = b_-(\sigma)$ for some $\sigma \geq 0$. Repeating the argument
in part 2), it follows that $\sigma > t$.  Indeed, like in (\ref{first}) we have
\[
e^{-t} = \|a(t)-b_-(t) \| \geq \| a(t)-b_-(\sigma) \| \geq \|a(\sigma)-b_-(\sigma)\| = e^{-\sigma},
\]
and the same argument as in part 2) shows $\sigma > t$.
But then again
\[
e^{-t} \geq \|a(t) - b_-(\sigma) \| \geq | e^{-t/2}-e^{-\sigma/2}| = e^{-t/2}-e^{-\sigma/2}
= e^{-t} \left(  e^{t/2}-e^{t-\sigma/2} \right),
\]
which shows
\[
0 < e^{t/2} - e^{t-\sigma/2} \leq 1.
\]
This can be re-arranged to
\begin{eqnarray}
\label{second}
0 < 1 - e^{t/2-\sigma/2} \leq e^{-t/2}.
\end{eqnarray}
In particular, for $t\to \infty$ we must have $\sigma -t\to 0$,  and in particular $0<\sigma-t \ll 2\pi$.
Therefore projected
points $b_-(\sigma)\in P_B\left( a(t)\right)$ lie
on the same tour of the spiral as $a(t),b_-(t)$,  and one does not  take 
shortcuts by jumping down a full turn of the spiral $B_-$ or more. Repeating the argument of 5),
we also see that $b_-(\sigma)=P_{B_-}\left( a(t)\right)$ is unique
for $t>0$ large enough.

8)
Let us now generate our Douglas--Rachford sequence $x_n$, starting at
$x_1=b_-(t_1)\in B_-$ with $t_1 > 0$, excluding $t_1=0$ for simplicity to have
a unique projection on the cylinder mantle at the start. 

We get $a_1=P_A(x_1)= a(t_1)$, hence  $R_A(x_1)= b_+(t_1)$. Since
$b_+(t_1)\in B$, it is its own reflection in $B$, and we get $b_1 = b_+(t_1)$. Averaging then
gives $x_2 = (b_1+x_1) /2= a_1 = a(t_1)$, which concludes the first DR-step.

The second DR-step proceeds as follows. Since $x_2 = a(t_1)\in A$, it is its own reflection in $A$,
so $a_2 = x_2 = a_1=R_A(x_1)$. Now let  $b_2=P_B(R_A(x_2))$, then
$b_2 = P_B\left( a(t_1) \right) = P_{B_-}\left( a(t_1) \right)$, so $b_2 = b_-(t_2)$ for some
$t_2 > t_1$, where $t_2$ is for $t_1$ what
$\sigma$ was for $t$ in part 7). So the reflected point
is $2b_-(t_2)-a(t_2)$ and averaging then gives
$x_3 = b_-(t_2)\in B_-$. 

Proceeding in this way, we generate a strictly increasing sequence $t_n$ such that
\begin{eqnarray}
\label{dr}
x_{2k-1} = b_-(t_k) \in B_-, \quad x_{2k} = a(t_k)\in A.
\end{eqnarray}
Moreover, the shadow and reflected shadow are
\[
P_A\left(x_{2k-1} \right) = a_{2k-1}=a(t_k), \quad P_B\left( R_A\left( x_{2k-1}\right)\right)=b_{2k-1}=b_+(t_k)
\]
respectively,
\[
P_A\left( x_{2k} \right)= a_{2k}=a(t_k),\quad P_B\left( R_A\left( x_{2k}\right)\right)= b_{2k}=b_-\left( t_k\right).
\]
Furthermore, note that
we generate a sequence of alternating projections
between $A$ and $B_-$. Namely
\begin{eqnarray}
\label{map}
b_0:= x_1 \stackrel{P_A}{\rightarrow} a_1=a_2 \stackrel{P_{B_-}}{\rightarrow}{ b_2}
\stackrel{P_A}{\rightarrow} a_3=a_4 \stackrel{P_{B_-}}{\rightarrow}b_4 \dots
\end{eqnarray}

9) We now argue that $t_n$ so constructed tends to $\infty$. Suppose on the contrary that
$t_n < t_{n+1} \to t^* < \infty$. Then from the construction we see that
we create a pair $a(t^*)\in A$, $b_-(t^*)\in B_-$ such that
$a(t^*)\in P_{A}(b_-(t^*))$ and $b_-(t^*)\in P_{B_-}(a(t^*))$. Arguing as before,
this would imply that $\tau \mapsto \frac{1}{2} \| a(t^*)-b_-(\tau)\|^2$ had a local
minimum at $\tau = t^*$, which it does not because its derivative  at $t^*$ is $-e^{-2t^*} < 0$. Hence
$t^* < \infty$ is impossble, and we have $t_n\to \infty$. As a consequence,
the statements about uniqueness of the operators and the estimate (\ref{first})
are now satisfied from some counter $n_0$ onward.

10)
To conclude, observe that $x_n-x_{n+1} \to 0$ by (\ref{second}), and that the $a(t_n)$ are $2e^{-t_n}$-dense
in the interval $[t_n,t_n+2\pi]$, because of
\[
\|a(t_n)-a(t_{n+1})\| \leq \|a(t_n)-b_-(t_{n+1})\| + \|b_-(t_{n+1})-a(t_{n+1})\| \leq e^{-t_n} + e^{-t_{n+1}}
\leq 2e^{-t_n}.
\] 
Using (\ref{fourth}) and the fact that every $a(t)$ with $t\in [t_n,t_n+2\pi]$ is at distance
$\leq e^{-t_n/2}$ to the set $F$, we deduce that every point in $F$
is an accumulation point of both the DR-sequence (\ref{dr}) and the MAP sequence (\ref{map}).
\end{proof}

\begin{remark}
{\bf (Strong fixed points need not be stable).}
The system-theoretic interpretation of this result
is that a strong fixed-point $x^*\in F\subset \sFix(T)$ need not be a stable steady state. This
is in contrast with Theorem \ref{theorem1}, where this was shown to
be true when $A,B$ are finite unions of convex sets. A second interpretation is that
$F$ is a stable attractor for the dynamic system $x^+=T(x)$. 
\end{remark}

\begin{remark}
{\bf (Shadows need not converge).}
We note that not only does the DR sequence $x_n$
fail to converge in Theorem \ref{theorem3}, also the sequences
$a_n=P_A(x_n)\in A$, $b_n=P_B(R_A(x_n))\in B$ fail to converge
and have  the same continuum set of accumulation points $F$.
Presently no example of failure of convergence of a local sequence $x_n$ is known
where the shadow sequence $a_n$ converges to a single limit $a^*\in A \cap B$.
It is clear that this could only happen when $F \subset \{x: \|x-a^*\|=\epsilon\}$
for some $\epsilon > 0$.
\end{remark}

\begin{corollary}
{\bf (Continuous limit cycle for MAP).}
Let $x_n$ be the Douglas--Rachford sequence constructed above,
and let $a_n\in A$, $b_n\in B$ be the shadows associated with $x_n$. Then
$b_{2n}, a_{2n+1}$ is a sequence of alternating projections between the 
cylinder $A$ and the inner spiral $B_-$. This sequence also fails
to converge and has the same set of accumulation points $F=A\cap B$.
\hfill$\square$
\end{corollary}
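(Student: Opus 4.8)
\emph{Proof proposal.} The plan is to observe that both assertions are already implicit in the proof of Theorem \ref{theorem3}, so the corollary amounts to repackaging that construction. First I would identify the interleaved sequence $b_0, a_1, b_2, a_3, \dots$, that is $(b_{2n}, a_{2n+1})$, with the chain displayed in (\ref{map}). The steps onto $A$ are governed by (\ref{fifth}), which gives $P_A(b_-(t_k)) = a(t_k)$, so each even-indexed iterate is carried to the following odd-indexed one by $P_A$. The steps onto $B_-$ are governed by parts 6) and 7): there it is shown that $P_B(a(t_k)) \subset P_{B_-}(a(t_k))$ and that the unique nearest point is $b_-(t_{k+1})$, so each odd-indexed iterate is carried to the next even-indexed one by $P_{B_-}$. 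Chaining these two facts shows that $(b_{2n}, a_{2n+1})$ is a sequence of alternating projections between the cylinder $A$ and the inner spiral $B_-$, as claimed.

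For nonconvergence and the accumulation set I would transport the conclusions of parts 9) and 10) verbatim. By part 9) we have $t_n \to \infty$, and by (\ref{second}) we have $t_{n+1} - t_n \to 0$, so the angular coordinates $t_k \bmod 2\pi$ become dense in $[0, 2\pi]$; meanwhile the height and radial deviations of $a(t_k)$ and $b_-(t_k)$ from $F$ tend to $0$, since $a(t_k)$ sits at distance $e^{-t_k/2}$ from $F$ and, by (\ref{fourth}), $b_-(t_k)$ sits at radial distance $e^{-t_k}$ from the cylinder while having height $e^{-t_k/2}$. The density estimate of part 10) then shows that every point of $F = A \cap B$ is an accumulation point of the MAP sequence. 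Since the angular coordinate does not settle, the sequence cannot converge, which finishes both remaining claims.

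I do not expect a genuine obstacle, as the argument is bookkeeping layered on Theorem \ref{theorem3}. The one point requiring care is the projection target in the $B$-steps: I must invoke part 6) to guarantee that projecting the shadow $a(t_k)$ onto the \emph{whole} spiral $B$ always lands on the inner branch $B_-$, so that the sequence genuinely alternates between $A$ and $B_-$ and never jumps to the outer branch $B_+$. A secondary bookkeeping point is the off-by-one between the indices $(b_{2n}, a_{2n+1})$ and the parameter labels $t_k$ in (\ref{dr}) and (\ref{map}); once this is aligned, the chain (\ref{map}) is literally the desired alternating-projection sequence, with starting point $b_0 = x_1 = b_-(t_1) \in B_-\setminus F$.
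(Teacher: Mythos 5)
Your proposal is correct and coincides with the paper's own treatment: the corollary carries no separate proof precisely because the alternating-projection chain is the sequence (\ref{map}) already exhibited in part 8) of the proof of Theorem \ref{theorem3}, and part 10) there explicitly states that every point of $F$ is an accumulation point of both the DR-sequence (\ref{dr}) and the MAP sequence (\ref{map}). Your bookkeeping — (\ref{fifth}) for the $P_A$ steps, parts 6)--7) for landing on $B_-$, and parts 9)--10) for density and nonconvergence — is exactly the content the paper relies on.
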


\begin{corollary}
{\bf (Limit cycle for MAP with one set convex).}
Every sequence of alternating projections $a_n,b_n$ between
the outer spiral $B_+$ and the solid cylinder {\rm conv}$(A)$
started at $b_1\in B_+\setminus F$ is bounded, satisfies $a_n-b_n\to 0$,
but fails to converge and has the set $F = B_+ \cap {\rm conv}(A)$
as its set of accumulation points. 
\end{corollary}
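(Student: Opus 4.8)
The plan is to reduce this corollary to the projection formulas already established in the proof of Theorem \ref{theorem3}, since the two maps driving this MAP sequence are precisely the ones analyzed there. The method alternates $a_n=P_{\mathrm{conv}(A)}(b_n)$ and $b_{n+1}=P_{B_+}(a_n)$. First I would record that $\mathrm{conv}(A)$ is the solid cylinder $\{(x,y,h): x^2+y^2\le 1,\ 0\le h\le 1\}$, and that for $t>0$ the outer-spiral point $b_+(t)$ has radius $1+e^{-t}>1$ and height $e^{-t/2}\in(0,1)$, so its projection (unique, by convexity of $\mathrm{conv}(A)$) onto the solid cylinder is the radial projection $P_{\mathrm{conv}(A)}(b_+(t))=a(t)$, the very same shadow point used throughout Theorem \ref{theorem3}. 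I would also note $B_+\cap\mathrm{conv}(A)=F$, since every spiral point lies strictly outside the cylinder while $F$ lies on its boundary.

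For the reverse projection, the content of parts 2)--5) of the proof of Theorem \ref{theorem3} is exactly a description of $P_{B_+}(a(t))$: it equals $b_+(\tau)$ for some $\tau>t$, and this projection is single-valued once $t$ is large. Starting from $b_1=b_+(s_1)$ with $s_1>0$, these two facts generate a sequence
\[
b_n=b_+(s_n),\qquad a_n=a(s_n),\qquad s_1<s_2<s_3<\cdots,
\]
where $s_{n+1}$ stands to $s_n$ exactly as $\tau$ stood to $t$ in (\ref{bound}). The forward projection is genuinely single-valued because $\mathrm{conv}(A)$ is convex, and the backward projection is single-valued for large $s_n$, so the tail of the sequence is completely determined.

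Next I would show $s_n\to\infty$ by the argument of part 9): if $s_n\uparrow s^*<\infty$, then $a(s^*)=P_{\mathrm{conv}(A)}(b_+(s^*))$ and $b_+(s^*)=P_{B_+}(a(s^*))$ would form a fixed-point pair for this MAP, forcing $\tau\mapsto\frac12\|a(s^*)-b_+(\tau)\|^2$ to have a local minimum at $\tau=s^*$; but its derivative there is $-e^{-2s^*}<0$, a contradiction. Given $s_n\to\infty$, boundedness is clear, since all points lie within radius $2$ and height in $[0,1]$, and $\|a_n-b_n\|=\|a(s_n)-b_+(s_n)\|=e^{-s_n}\to 0$ by (\ref{fourth}), which is the asserted $a_n-b_n\to0$. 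Finally, repeating the density estimate of part 10), $\|a(s_n)-a(s_{n+1})\|\le 2e^{-s_n}$ while $\sum(s_{n+1}-s_n)=\infty$ forces $s_n\bmod 2\pi$ to become dense in $[0,2\pi]$; combined with $e^{-s_n/2}\to0$, this shows the accumulation set of $(a_n)$, and hence of $(b_n)$, is exactly $F$, so the sequence cannot converge.

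I do not expect a genuine obstacle: the corollary is essentially a transcription of Theorem \ref{theorem3} through the single new geometric observation $P_{\mathrm{conv}(A)}(b_+(t))=a(t)$, which replaces the cylinder-mantle shadow by the solid-cylinder shadow. The only point needing a little care is the density argument guaranteeing that the full circle $F$ is attained and that no subsequential limit escapes $F$; this, however, is identical to part 10) once $s_n\to\infty$ and the increment bound (\ref{bound}) are in hand.
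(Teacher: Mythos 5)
Your proposal is correct and follows essentially the same route as the paper, whose entire proof is the one-line observation that this MAP sequence is generated by the building blocks $a(t)\to b_+(\tau)\to a(\tau)$ already analyzed in part 2) of the proof of Theorem \ref{theorem3}. You simply make explicit what the paper leaves implicit --- the fact that $P_{\mathrm{conv}(A)}\left(b_+(t)\right)=a(t)$ coincides with the mantle shadow, and the adaptations of parts 9) and 10) for divergence of the parameters and density of the accumulation set --- so there is nothing to correct.
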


\begin{proof}
In part 2) of the proof of Theorem \ref{theorem3} we  analyzed this sequence,
which is generated by the building blocks $a(t) \to b_+(\tau) \to a(\tau)$. 
\end{proof}

\begin{remark}
Here we have an example of a semi-algebraic
convex set
conv$(A)$, and the spiral $B_+$, which is the projection of a semi-analytic set in $\mathbb R^4$,
where the MAP sequence fails to converge and leads to a continuous limit cycle. The first example
with a continuous limit cycle appears in \cite{spiral}, but with more pathological sets $A,B$. 
The fact that $B_+$ is not subanalytic can be deduced
from   \cite[Cor.~7]{aude}. 
Currently we do not have an example where the DR-algorithm
fails to converge and creates a continuous limit cycle with one of the sets
convex.
\end{remark} 

\section*{Acknowledgements}
\noindent
H.H.\ Bauschke  was supported by the Natural Sciences and Engineering Research Council of Canada and the Canada Research Chair Program. D. Noll acknowledges hospitality of the University of British 
Columbia Kelowna and support by the Pacific Institute of the Mathematical Sciences
during the preparation of this
paper.
The visualization was made possible  by GeoGebra \cite{geogebra}.

\end{document}